\theoremstyle{plain}
\theoremstyle{definition}\newtheorem{theorem}{Theorem}[section]
\theoremstyle{plain}\newtheorem{lemma}[theorem]{Lemma}
\theoremstyle{plain}
\theoremstyle{plain}
\theoremstyle{remark}\newtheorem{remark}{Remark}[section]
\newcommand{\be}{\begin{equation}}
\newcommand{\ee}{\end{equation}}
 \newcommand{\ba}{\begin{aligned}}
 \newcommand{\ea}{\end{aligned}}
  \newcommand{\ben}{\begin{enumerate}}
   \newcommand{\een}{\end{enumerate}}
\newcommand{\Rmnum}[1]{\expandafter\@slowromancap\romannumeral #1@}
\begin{document}
%%%%%%%%%%%%%%%%%%%%%%%%%%%%%%%%%%%%%%%%%%%%%%%%%%%%%%%%%%%%%%%%%%%%%%%%%%%%%%%%%%%%%%%%%%%%%%%%%%%%

\title{ Global Regularity of
2D Generalized MHD Equations with Magnetic Diffusion}
\date{}
\maketitle
 \centerline{\scshape Quansen Jiu\footnote{The research is partially
supported by National Natural Sciences Foundation of China (No.
11171229, 11231006 and 11228102) and Project of Beijing Education
Committee.}}

{\footnotesize
  \centerline{School of Mathematical Sciences, Capital Normal University}
  \centerline{Beijing  100048, P. R. China}
   \centerline{  \it Email: jiuqs@mail.cnu.edu.cn}

\vspace{3mm}

 \centerline{\scshape Jiefeng Zhao }

\medskip
{\footnotesize
  \centerline{School of Mathematical Sciences, Capital Normal University}
  \centerline{Beijing  100048, P. R. China}
   \centerline{  \it Email: zhaojiefeng001@163.com}

%\subjclass[2000]{35Q35,76B03,76W05}
%\date{ 2013}

\begin{abstract}
\noindent

   \textbf{Abstract:} This paper is concerned with the global regularity of the  2D (two-dimensional)
   generalized  magnetohydrodynamic equations with only magnetic
diffusion $\Lambda^{2\beta} b$. It is proved that when $\beta>1 $
there exists a unique global regular solution for this equations.
The obtained result improves the previous known one which requires
that $\beta>\frac32$.
\end{abstract}

\section{Introduction}

Consider the  Cauchy problem of the following two-dimensional
generalized magnetohydrodynamic equations:
\begin{eqnarray}
\left\{
 \begin{array}{llll}\label{eq}
  u_t + u \cdot \nabla u  =  - \nabla p + b \cdot \nabla b - \nu \Lambda^{
  2\alpha} u,  \\
  b_t + u \cdot \nabla b = b \cdot \nabla u - \kappa \Lambda^{2\beta} b,\\
  \nabla \cdot u = \nabla \cdot b  =  0, \\
  u\left(x,0\right)=u_0\left(x\right),\,\,\, b\left(x,0\right)=b_0\left(x\right)
  \end{array}\right.
\end{eqnarray}
for $x\in \mathbb{R}^2$ and $t>0$, where $ u=u\left(x,t\right) $ is
the velocity, $ b =b\left(x,t\right) $ is the magnetic,
$ p =p\left(x,t\right) $ is the pressure, and $
u_0\left(x\right),\,b_0\left(x\right) $ with $\mathrm{div}
u_0\left(x\right)=\mathrm{div} b_0\left(x\right)=0$ are the initial
velocity and magnetic, respectively. Here $\nu, \kappa,
\alpha, \beta \ge 0$ are nonnegative constants and  $\Lambda$
is defined by
\begin{equation*}
  {\widehat{\Lambda f}}(\xi) = \left| \xi \right| \widehat{f}(\xi),
\end{equation*}
where $\wedge$ denotes the Fourier transform. In the following
sections, we will use the inverse Fourier transform $\vee$.

The global regularity of the 2D GMHD equations (\ref{eq}) has
attracted a lot of attention and there have been extensive studies
(see\cite{FNZ2013}-\cite{ZF2011}).
 It follows from \cite{W2011} that the
problem (\ref{eq}) has a unique global regular solution if
\begin{displaymath}
  \alpha \geqslant 1 , \ \  \beta > 0,
  \ \ \alpha + \beta \geqslant 2.
\end{displaymath}
Tran, Yu and Zhai \cite{TYZ2013} got a global regular solution under
assumptions that
\begin{equation*}
  \alpha \geqslant 1 / 2,\,\, \beta \geqslant 1 \hspace{2em}\mbox{or}\hspace{2em}
  0 \leqslant \alpha < 1 / 2,\,\, 2 \alpha + \beta > 2 \hspace{2em}\mbox{or}\hspace{2em}
  \alpha \geqslant 2, \,\,\beta = 0.
\end{equation*}
Recently, it was shown in {\cite{JZ2013}} that if $ 0\leqslant\alpha
< 1 / 2,\,\, \beta \geqslant 1, \,\,3\alpha + 2\beta >3 $, then the
solution is global regular. In particular, when $\alpha=0,
\beta>\frac 32$, the solution is global regular. This  was proved
independently in \cite{Y2013-2}\cite{YB2013}. Meanwhile, Fan,
Nakamura and Zhou {\cite{FNZ2013}} used properties of the heat
equation and presented a global regular solution when
$0<\alpha<\frac{1}{2}, \beta=1$.

In this paper, we aim at getting the global regular solution of
\eqref{eq} when $\nu=0,  \kappa>0$ and $\beta>1$. For simplicity, we
let $\kappa=1$. That is, we consider
\begin{eqnarray}
\left\{
 \begin{array}{llll}\label{eq2}
  u_t + u \cdot \nabla u  =  - \nabla p + b \cdot \nabla b,  \\
  b_t + u \cdot \nabla b = b \cdot \nabla u - \kappa \Lambda^{2\beta} b,\\
  \nabla \cdot u = \nabla \cdot b  =  0, \\
  u\left(x,0\right)=u_0\left(x\right),\,\,\,
  b\left(x,0\right)=b_0\left(x\right).
  \end{array}\right.
\end{eqnarray}

 Let $\omega = -
\partial_2 u_1 +\partial_1 u_2$ and
 $j  = - \partial_2 b_1 + \partial_1 b_2$ represent the vorticity and the current respectively.
 We will prove that $\omega, j\in L^2(0,T;L^\infty)$
 and  obtain the global regularity of the solution by the BKM type criterion in \cite{CKS1997}.
To this end, we will take advantage of the approaches  used in
{\cite{FNZ2013}} and \cite{JZ2013} to deal with the higher
regularity estimates of $j$. More precisely, using the equation
satisfied by the current $j$, we will obtain the estimates of
$\|\Lambda^r j\|_{L^2}^2 \left( t \right) + \int_0^t \|
\Lambda^{\beta+r} j\|_{L^2}^2\le C$ with $r=\beta-1$.  Using the
singular integral representation of $\Lambda^\delta j$ with some
$\delta>0$, we will obtain the estimate $\|\nabla
j\|_{L^2(0,T;L^\infty(\mathbb{R}^2))}$. Then we get the estimates of
$\|\omega\|_{L^2(0,T;L^\infty(\mathbb{R}^2))}$ using the particle
trajectory method. It should be noted that after the paper is
finished, at the almost same time, Cao, Wu and Yuan obtain the
similar result independently using a different method
(see\cite{CWY-2013}). In comparison with result obtained in
\cite{CWY-2013}, it is not required that $\|\nabla
j_0\|_{L^\infty}<\infty$ in our result.

The main result  of this paper is stated as follows.
\begin{theorem}
   Let $\beta>1$ and assume that $\left( u_0, b_0 \right)\in H^\rho$ with $\rho >\max\{2,\beta\}$. Then for any $T>0$, the Cauchy problem \eqref{eq2} has a unique   regular solution
\begin{equation*}
(u,b)\in L^\infty([0,T];H^\rho(\mathbb R^2)) \,\,\mbox{and}\,\, b\in L^2([0,T];H^{\rho+\beta}(\mathbb R^2)).
\end{equation*}
\end{theorem}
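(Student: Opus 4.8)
The plan is to establish global-in-time a priori bounds for smooth solutions and then invoke a BKM-type continuation criterion. First I would record the basic energy identity: testing the $u$-equation with $u$ and the $b$-equation with $b$ and adding, the nonlinear terms $b\cdot\nabla b\cdot u$ and $b\cdot\nabla u\cdot b$ cancel by incompressibility, giving
\be
\norm{u(t)}_{L^2}^2+\norm{b(t)}_{L^2}^2+2\int_0^t\norm{\Lambda^\beta b}_{L^2}^2\,ds=\norm{u_0}_{L^2}^2+\norm{b_0}_{L^2}^2.
\ee
Next I would work at the level of vorticity $\omega$ and current $j$, which satisfy
\be
\partial_t\omega+u\cdot\nabla\omega=b\cdot\nabla j,\qquad \partial_t j+u\cdot\nabla j=b\cdot\nabla\omega+\Lambda^{2\beta}(\text{lower order})+2\partial_1b_1(\partial_1u_2+\partial_2u_1)-2\partial_1u_1(\partial_1b_2+\partial_2b_1),
\ee
the last terms being the usual quadratic ``stretching'' contributions. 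An $L^2$ estimate on this system alone does not close because there is no dissipation in $\omega$; the key is to move to higher regularity on $j$ using the magnetic diffusion. Following \cite{FNZ2013} and \cite{JZ2013}, I would apply $\Lambda^r$ with $r=\beta-1$ to the $j$-equation, pair with $\Lambda^r j$, and use the dissipation $\int\norm{\Lambda^{\beta+r}j}_{L^2}^2$ to absorb the commutator and nonlinear terms; the commutator estimates (Kato–Ponce type) combined with the energy bound on $b$ and Gronwall should yield
\be
\norm{\Lambda^r j(t)}_{L^2}^2+\int_0^t\norm{\Lambda^{\beta+r}j}_{L^2}^2\,ds\le C(T).
\ee

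With that bound in hand, the next step is to upgrade to $L^\infty$ control of $\nabla j$. Since $\beta>1$ we have $\beta+r=2\beta-1>1$, so by Sobolev embedding $\Lambda^{\beta+r}j\in L^2_tL^2_x$ together with the regularity of $j$ itself places $\Lambda^\delta j$ in a space embedding into $L^\infty_x$ for some small $\delta>0$; writing $\Lambda^\delta j$ via its singular-integral (Riesz-potential) representation and interpolating, I would derive $\nabla j\in L^2(0,T;L^\infty(\mathbb R^2))$. Once $\nabla j\in L^2_tL^\infty_x$, the $\omega$-equation gives, along particle trajectories $X(t)$, $\frac{d}{dt}\omega(X(t),t)=b\cdot\nabla j$, and since $\nabla j\in L^\infty_x$ and $b\in L^\infty_tL^2_x\cap L^2_tH^\beta_x\hookrightarrow L^2_tL^\infty_x$, one controls $\norm{\omega}_{L^\infty_x}$ — more precisely one gets $\omega,j\in L^2(0,T;L^\infty)$. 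This is exactly the input needed for the BKM-type criterion of \cite{CKS1997}, which then promotes the solution to $(u,b)\in L^\infty([0,T];H^\rho)$ with $b\in L^2([0,T];H^{\rho+\beta})$; uniqueness follows from a standard energy argument on the difference of two solutions at the $L^2$ (or $H^s$ for small $s$) level.

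The main obstacle I anticipate is closing the higher-order estimate on $j$ at the threshold $\beta>1$: the quadratic terms in the $j$-equation of the form $\partial b\,\partial u$ are genuinely borderline, because $u$ enjoys no dissipation, so one must trade derivatives carefully — using $\nabla u=\nabla\nabla^\perp\Delta^{-1}\omega$ and Calderón–Zygmund bounds, and feeding back the $L^2$ energy of $b$ and the dissipative norm $\norm{\Lambda^{2\beta-1}j}_{L^2}$ — to keep the right-hand side of the $\norm{\Lambda^{\beta-1}j}_{L^2}^2$ differential inequality integrable in time. The secondary delicate point is ensuring the singular-integral representation of $\Lambda^\delta j$ genuinely yields an $L^\infty$ bound on $\nabla j$ that is square-integrable in time (rather than merely finite), since a logarithmic loss here would force $\beta\ge\frac32$ as in the earlier literature; the gain to $\beta>1$ comes precisely from exploiting the full strength of $2\beta-1>1$ in the Sobolev/interpolation step rather than just $\beta>1$ in the dissipation itself.
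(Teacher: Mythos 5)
Your overall architecture (energy estimates, the $\Lambda^{\beta-1}$ bound on $j$, then $\nabla j\in L^2(0,T;L^\infty)$, then particle trajectories for $\omega$ and the BKM-type criterion of \cite{CKS1997}) matches the paper's, but there is a genuine gap at the decisive step: the passage from the dissipative bound $j\in L^2(0,T;H^{2\beta-1})$ to $\nabla j\in L^2(0,T;L^\infty)$. You propose to get this ``by Sobolev embedding'' and ``interpolation'' from $2\beta-1>1$. In two dimensions, $H^{2\beta-1}\hookrightarrow W^{1,\infty}$ requires $2\beta-1>2$, i.e.\ $\beta>\frac32$ --- exactly the old threshold the theorem is meant to beat. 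Knowing $\Lambda^{\delta}j\in L^\infty_x$ for some small $\delta>0$ (which is all that $2\beta-1>1$ buys you) is a full derivative short of $\nabla j\in L^\infty_x$, and no representation formula or interpolation among the norms you have listed can recover that derivative. You even flag this as the ``delicate point,'' but the resolution you offer (``exploiting the full strength of $2\beta-1>1$'') is quantitatively insufficient; as written, your argument only reproves the known $\beta>\frac32$ result.

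The paper closes this gap by a mechanism your proposal omits entirely: a parabolic bootstrap in $L^p$ with $p$ large, based on the Duhamel representation with the kernel $h=(e^{-|\cdot|^{2\beta}})^{\vee}$ of the semigroup $e^{-t\Lambda^{2\beta}}$ and the $L^1$ bounds $\|\Lambda^{\eta}\nabla^{l}h\|_{L^1}\leqslant C$ of Lemma 2.1. Concretely, one first shows (Step 1) that $\omega\in L^\infty(0,T;L^p)$ for every $2<p<\infty$, so that the forcing terms $uj$, $b\omega$ and $T(\nabla u,\nabla b)$ in the $j$-equation lie in $L^p$ in space--time; then (Step 2) one applies $\Lambda^{\delta}$ with $0<\delta<\min\{2\beta-2,\rho-2\}$ and reads off $\nabla\Lambda^{\delta}j\in L^2(0,T;L^p)$ from the Duhamel formula, the time singularity $(t-s)^{-(2+\delta)/(2\beta)}$ being integrable precisely because $\delta<2\beta-2$. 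Choosing $p>2/\delta$ and using $W^{\delta,p}(\mathbb R^2)\hookrightarrow L^\infty(\mathbb R^2)$ then yields $\nabla j\in L^2(0,T;L^\infty)$. The gain from $\beta>\frac32$ down to $\beta>1$ comes from working in $L^p$ with $p$ large rather than in $L^2$, which is why the preliminary $L^p$ vorticity estimates --- absent from your plan --- are indispensable rather than a technical afterthought. (A minor further slip: the dissipative term in your displayed $j$-equation should be $-\Lambda^{2\beta}j$ itself, not $\Lambda^{2\beta}$ of a lower-order quantity.)
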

\begin{remark}
When $\alpha=0, \beta>\frac{3}{2}, \rho>2$, the result has been
obtained in \cite{JZ2013}, \cite{Y2013-2}and \cite{YB2013}.
\end{remark}

\section{Preliminaries}\label{sec:the preparations}
Let us first consider the following equation
\begin{eqnarray*}
 \left\{
 \begin{array}{llll}
   v_t +\Lambda^{2\beta}v =  f
  \\
   v\left(x,0\right)=v_0(x).
 \end{array}\right.
\end{eqnarray*}
Similar to the heat equation, we can get
\begin{equation}\label{heq}
v\left(x,t\right)=\int_{\mathbb{R}^2} t^{-\frac{1}{\beta}}h\left(\frac{x-y}{t^\frac{1}{2\beta}}\right)v_0(y) \mathrm dy+
\int_0^t \int_{\mathbb{R}^2} (t-s)^{-\frac{1}{\beta}}h\left(\frac{x-y}{(t-s)^\frac{1}{2\beta}}\right)f\left(y,s\right) \mathrm dy\mathrm d s,
\end{equation}
where $h(x)=\left(e^{-|\cdot|^{2\beta}}\right)^\vee(x)$ and it has the similar properties as the heat kernel.
\begin{lemma}Let $l$ be a nonnegative integer and $\eta\geqslant0$, then
\begin{eqnarray}
\left\|\nabla^{l} h\right\|_{L^1}+\left\|\Lambda^{\eta}h\right\|_{L^1}\leqslant C.
\end{eqnarray}
\end{lemma}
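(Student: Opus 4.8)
The statement concerns the fractional–heat kernel $h=\big(e^{-|\cdot|^{2\beta}}\big)^{\vee}$ on $\mathbb R^{2}$, so I will work on the Fourier side. Since each component of $\nabla^{l}h$ has symbol $(i\xi)^{\alpha}e^{-|\xi|^{2\beta}}$ with $|\alpha|=l$, and $\Lambda^{\eta}h$ has symbol $|\xi|^{\eta}e^{-|\xi|^{2\beta}}$, it is enough to prove that $\mathcal F^{-1}[m]\in L^{1}(\mathbb R^{2})$ (here $\mathcal F^{-1}={}^{\vee}$), with $L^{1}$-norm bounded in terms of $\beta,l,\eta$, for every multiplier $m(\xi)=(i\xi)^{\alpha}|\xi|^{\eta}e^{-|\xi|^{2\beta}}$. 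The plan is a standard frequency splitting: fix $\psi_{0}\in C_{c}^{\infty}(\mathbb R^{2})$ with $\psi_{0}\equiv1$ on $B(0,1/2)$ and $\operatorname{supp}\psi_{0}\subset B(0,1)$, and estimate $\mathcal F^{-1}[(1-\psi_{0})m]$ and $\mathcal F^{-1}[\psi_{0}m]$ separately. The high-frequency piece is immediate: on $\operatorname{supp}(1-\psi_{0})$ the factors $(i\xi)^{\alpha}$ and $|\xi|^{\eta}$ are $C^{\infty}$ while $e^{-|\xi|^{2\beta}}$ together with all of its derivatives decays faster than any power of $|\xi|$, so $(1-\psi_{0})m\in\mathcal S(\mathbb R^{2})$ and its inverse transform lies in $\mathcal S(\mathbb R^{2})\subset L^{1}(\mathbb R^{2})$.

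The content is in the low-frequency piece $\psi_{0}m$, which is supported in $B(0,1)$ and smooth away from the origin; near the origin $e^{-|\xi|^{2\beta}}=1+O(|\xi|^{2\beta})$ makes its leading part homogeneous of degree $l+\eta$. I would decompose it dyadically toward the origin: pick a Littlewood--Paley bump $\phi$ supported in a fixed annulus with $\sum_{k\in\mathbb Z}\phi(2^{-k}\xi)=1$ for $\xi\ne0$, so that $\psi_{0}=\sum_{j\le0}\phi_{j}$ with $\phi_{j}=\psi_{0}\,\phi(2^{-j}\cdot)$. The substitution $\xi=2^{j}\zeta$ gives $\mathcal F^{-1}[m\phi_{j}](x)=2^{j(2+l+\eta)}G_{j}(2^{j}x)$, where $G_{j}$ is the inverse transform of $(i\zeta)^{\alpha}|\zeta|^{\eta}\phi(\zeta)\,e^{-2^{2j\beta}|\zeta|^{2\beta}}\psi_{0}(2^{j}\zeta)$. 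Since $j\le0$, the two $j$-dependent factors have $C^{M}$-norms on the fixed annulus bounded uniformly in $j$ for every $M$, so $\|G_{j}\|_{L^{1}}\le C$ uniformly in $j\le0$ (bound $(1+|x|^{2})^{3/2}G_{j}$ in $L^{\infty}$ by the $C^{3}$-size of the symbol and integrate). Undoing the dilation, $\|\mathcal F^{-1}[m\phi_{j}]\|_{L^{1}}=2^{j(l+\eta)}\|G_{j}\|_{L^{1}}\le C\,2^{j(l+\eta)}$, and summing the geometric series over $j\le0$ yields $\|\mathcal F^{-1}[\psi_{0}m]\|_{L^{1}}<\infty$ --- hence the lemma --- as soon as $l+\eta>0$.

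The one point I expect to require care is the borderline case $l=\eta=0$, i.e. the kernel $h$ itself: then the homogeneity degree is $0$, the series $\sum_{j\le0}2^{j\cdot0}$ diverges, and the crude count gives only the non-integrable decay $|x|^{-2}$. The fix is that the genuinely singular leading term is $-|\xi|^{2\beta}$, not the constant $1$: writing $\psi_{0}e^{-|\xi|^{2\beta}}=\psi_{0}+\psi_{0}\big(e^{-|\xi|^{2\beta}}-1\big)$, the inverse transform of $\psi_{0}\in C_{c}^{\infty}$ is Schwartz and the remainder has leading homogeneity $2\beta>0$, so the dyadic estimate above applies with $l+\eta$ replaced by $2\beta$ and closes. (The same trick lets one optionally peel off the polynomial part $(i\xi)^{\alpha}|\xi|^{\eta}\psi_{0}$ when $l\ge1$ or $\eta\in2\mathbb Z_{\ge0}$, which only improves the decay.) Adding the high- and low-frequency contributions gives $\|\nabla^{l}h\|_{L^{1}}+\|\Lambda^{\eta}h\|_{L^{1}}\le C$; note that only $\beta>0$ is used, so the hypothesis $\beta>1$ of the paper is not needed for this lemma.
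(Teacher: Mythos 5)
Your proof is correct, but it is organized quite differently from the paper's. The paper handles the two terms separately: for $\nabla^{l}h$ it integrates by parts directly in $\xi$ (inserting $(1+|x|^{2})^{-2}(1-\Delta_{\xi})^{2}e^{ix\cdot\xi}$ and splitting $|x|\leqslant 1$ from $|x|\geqslant 1$), and it then deduces the bound for $\Lambda^{\eta}h$ from the one for $\nabla^{l}h$ via a nonhomogeneous Littlewood--Paley decomposition and Bernstein's inequality, summing $2^{k(\eta-l)}$ over the high-frequency blocks $k\geqslant 0$ with $l>\eta$. You instead treat the general multiplier $(i\xi)^{\alpha}|\xi|^{\eta}e^{-|\xi|^{2\beta}}$ in one stroke, disposing of high frequencies as a Schwartz tail and resolving the low frequencies by a homogeneous dyadic decomposition toward $\xi=0$ with rescaling of each annulus; the only delicate point, the non-summable borderline $l=\eta=0$, you correctly fix by subtracting the constant term of $e^{-|\xi|^{2\beta}}$ to gain homogeneity $2\beta$. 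The trade-off: the paper's argument for $\nabla^{l}h$ is shorter, but its integration by parts tacitly requires $(1-\Delta_{\xi})^{2}\bigl(\xi^{\gamma}e^{-|\xi|^{2\beta}}\bigr)$ to be locally integrable at the origin, which for $l=0$ and non-integer $\beta$ forces $\beta>1$ (harmless here, since that is the paper's standing assumption, but worth noticing); your dyadic argument isolates the origin explicitly and therefore works for every $\beta>0$ and for the mixed symbols $\Lambda^{\eta}\nabla^{l}h$ that are actually invoked later in Step 2 of the main proof. Both are valid; yours is slightly longer but more robust and more general.
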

\begin{proof}
First, we  give the proof of the estimates of $\nabla^{l}h$.
\begin{eqnarray*}
\left\|\nabla^{l} h\right\|_{L^1}&=&C\sup_{|\gamma|=l}\int_{\mathbb R^2}\left|\int_{\mathbb R^2}\xi^\gamma e^{-\left|\xi\right|^{2\beta}}e^{ix\cdot \xi}\mathrm d\xi\right|\mathrm dx\\
&=&C\sup_{|\gamma|=l} \int_{\left|x\right|\leqslant 1}\left|\int_{\mathbb R^2}\xi^\gamma e^{-\left|\xi\right|^{2\beta}}e^{ix\cdot \xi}\mathrm d\xi\right|\mathrm dx+
C\int_{\left|x\right|\geqslant 1}\left|\int_{\mathbb R^2}\xi^\gamma e^{-\left|\xi\right|^{2\beta}}e^{ix\cdot \xi}\mathrm d\xi\right|\mathrm dx\\
&\leqslant & C+ C\sup_{|\gamma|=l}\int_{\left|x\right|\geqslant 1}(1+\left|x\right|^2)^{-2}\left|\int_{\mathbb R^2}\xi^\gamma e^{-\left|\xi\right|^{2\beta}}(1-\Delta_\xi)^2e^{ix\cdot \xi}\mathrm d\xi\right|\mathrm dx\\
&\leqslant & C+ C\sup_{|\gamma|=l}\int_{\left|x\right|\geqslant 1}(1+\left|x\right|^2)^{-2}\left|\int_{\mathbb R^2}(1-\Delta_\xi)^2(\xi^\gamma e^{-\left|\xi\right|^{2\beta}})e^{ix\cdot \xi}\mathrm d\xi\right|\mathrm dx\\
&\leqslant & C.
\end{eqnarray*}
Next, we start to estimate $\Lambda^\eta h$ and let $l>\eta$.
\begin{eqnarray*}
\left\|\Lambda^{\eta} h\right\|_{L^1}&=&\left\|\sum_{k\geqslant-1}\Delta_{k}\Lambda^{\eta} h\right\|_{L^1}\\
&\leqslant&\left\|\Delta_{-1}\Lambda^{\eta} h\right\|_{L^1}+\sum_{k\geqslant 0}\left\|\Delta_{k}\Lambda^{\eta} h\right\|_{L^1}\\
&\leqslant&C\left\| h\right\|_{L^1}+ C\sum_{k\geqslant 0}2^{k(-l+\eta)}\left\|\Delta_{k}\nabla^{l} h\right\|_{L^1}\\
&\leqslant&C+C\sum_{k\geqslant 0}2^{k(-l+\eta)}\left\|\nabla^{l} h\right\|_{L^1}\\
&\leqslant&C,
\end{eqnarray*}
where we use the nonhomogeneous Littlewood-Paley decompositions $Id=\sum_k \Delta_k$ and Bernstein-Type inequalities (see\cite{BCD2011}).

\end{proof}

Now, let $\omega = \nabla^{\bot} \cdot u = - \partial_2 u_1 + \partial_1 u_2$
  and $j = \nabla^{\bot} \cdot b = - \partial_2 b_1 + \partial_1 b_2$,
and applying $\nabla^{\bot} \cdot$ to the equations (\ref{eq}), we obtain the following equations for
   $\omega$ and $j$:
  \begin{eqnarray}
    \omega_t + u \cdot \nabla \omega & = & b \cdot \nabla j,  \label{eq:omega-L2}\\
    j_t + u \cdot \nabla j & = & b \cdot \nabla \omega + T \left( \nabla u,
    \nabla b \right) - \Lambda^{2 \beta} j,  \label{eq:j-L2}
  \end{eqnarray}
 where
  \begin{equation*}
    T \left( \nabla u, \nabla b \right) = {\color{black} 2 \partial_1 b_1
    \left( \partial_1 u_2 + \partial_2 u_1 \right) + 2 \partial_2 u_2  \left(
    \partial_1 b_2 + \partial_2 b_1 \right)} .
  \end{equation*}
The estimates for $\omega, j$ are obtained in \cite{TYZ2013} and \cite{JZ2013}, which is presented
in the following lemma.
\begin{lemma}
  Assume that  $\alpha = 0,\,\,\beta > 1,\, r=\beta-1,\,\,\mbox{and}\,\, k \geqslant \beta$.
  Let $u_0, b_0 \in H^k$. For any $T > 0$, we have
  \begin{eqnarray}
    \left\| \omega \right\|_{L^2}^2 \left( t \right) + \left\| j
    \right\|_{L^2}^2 \left( t \right) + \int_0^t  \left\| \Lambda^{\beta} j
    \right\|_{L^2}^2  \mathrm d\tau \leqslant C \left( T
    \right), \\
    \left\|\Lambda^r j\right\|_{L^2}^2 \left( t \right) + \int_0^t \left\| \Lambda^{\beta+r} j
    \right\|_{L^2}^2  \mathrm d\tau \leqslant C \left(T
    \right).
  \end{eqnarray}
\end{lemma}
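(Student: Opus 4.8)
The plan is to prove the two bounds in order, deriving both from energy-type estimates in which the magnetic dissipation $\Lambda^{2\beta}b$ is the only smoothing mechanism while the velocity is merely transported. First I would record the basic energy identity: pairing the $u$-equation with $u$ and the $b$-equation with $b$, the Lorentz and induction nonlinearities cancel after integration by parts (using $\nabla\cdot u=\nabla\cdot b=0$), giving
\[
\tfrac12\tfrac{d}{dt}\bigl(\norm{u}_{L^2}^2+\norm{b}_{L^2}^2\bigr)+\norm{\Lambda^\beta b}_{L^2}^2=0 .
\]
In particular $\int_0^T\norm{\Lambda^\beta b}_{L^2}^2\,d\tau<\infty$, and since $j=\nabla^\perp\cdot b$ gives $\Lambda^\beta b\simeq\Lambda^{\beta-1}j=\Lambda^r j$ up to Riesz transforms, this already yields $\int_0^T\norm{\Lambda^r j}_{L^2}^2\,d\tau\le C(T)$. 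This $L^1$-in-time quantity is the key resource for everything below. For the first bound I would run the coupled $\omega$--$j$ estimate: pairing \eqref{eq:omega-L2} with $\omega$ and \eqref{eq:j-L2} with $j$ and adding, the transport terms vanish and the two Lorentz terms $\int(b\cdot\nabla j)\omega$ and $\int(b\cdot\nabla\omega)j$ cancel (again by $\nabla\cdot b=0$), leaving
\[
\tfrac12\tfrac{d}{dt}\bigl(\norm{\omega}_{L^2}^2+\norm{j}_{L^2}^2\bigr)+\norm{\Lambda^\beta j}_{L^2}^2=\int T(\nabla u,\nabla b)\,j\,dx .
\]

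The crux is the tension term. Since every summand of $T$ is a product of one velocity gradient and one magnetic gradient, and $\nabla u\simeq\omega$, $\nabla b\simeq j$ in $L^p$ ($1<p<\infty$), it is controlled by $\int|\omega|\,j^2\,dx$. The main obstacle is that there is \emph{no} dissipation for $\omega$: the naive split $\norm{\omega}_{L^2}\norm{j}_{L^4}^2$ followed by interpolating $\norm{j}_{L^4}$ between $\norm{j}_{L^2}$ and $\norm{\Lambda^\beta j}_{L^2}$ and absorbing the dissipation produces a Riccati inequality $X'\lesssim X^{1+\delta}$ and only a local bound. The resolution is to route the interpolation through $\Lambda^r$ instead: Gagliardo--Nirenberg in $\mathbb R^2$ gives $\norm{j}_{L^4}\lesssim\norm{\Lambda^{1/2}j}_{L^2}$, and $\Lambda^{1/2}$ is interpolated between $\Lambda^r$ and $\Lambda^\beta$ (for $1<\beta\le\tfrac32$) or between $\norm{j}_{L^2}$ and $\Lambda^r$ (for $\beta\ge\tfrac32$, where additionally $\norm{\omega}_{L^2}\norm{j}_{L^2}\le X$ keeps things linear). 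Carrying this out with Young's inequality yields, for every $\beta>1$, a bound of the form
\[
\Bigl|\int T\,j\,dx\Bigr|\le \tfrac12\norm{\Lambda^\beta j}_{L^2}^2+C\,\phi(t)\bigl(1+\norm{\omega}_{L^2}^2+\norm{j}_{L^2}^2\bigr),
\]
where $\phi$ is a fixed power of $\norm{\Lambda^r j}_{L^2}$ lying in $L^1(0,T)$ by the basic energy. It is precisely here that $\beta>1$ is used: it makes the power of $\norm{\Lambda^\beta j}_{L^2}$ strictly below $2$ (absorbable), the power of $\norm{\Lambda^r j}_{L^2}$ at most $2$ (time-integrable), and the velocity norm enter at most quadratically (so the differential inequality is linear in $X=\norm{\omega}_{L^2}^2+\norm{j}_{L^2}^2$). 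Gr\"onwall's inequality with coefficient $\phi\in L^1(0,T)$ then gives the pointwise bound on $X$ and, after integrating, the bound on $\int_0^t\norm{\Lambda^\beta j}_{L^2}^2\,d\tau$, which is the first estimate.

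For the second bound I would apply $\Lambda^r$ to \eqref{eq:j-L2} and pair with $\Lambda^r j$, so the dissipation contributes $\norm{\Lambda^{\beta+r}j}_{L^2}^2$. The decisive identity is $2r+1=\beta+r$ (as $r=\beta-1$): in the transport term $-\int\Lambda^r(u\cdot\nabla j)\Lambda^r j$ and the stretching term $\int\Lambda^r(b\cdot\nabla\omega)\Lambda^r j$ one uses self-adjointness of $\Lambda^r$ and integration by parts to move the extra derivative and one copy of $\Lambda^r$ onto $j$, turning it into exactly $\Lambda^{\beta+r}j$; this avoids ever differentiating $u$ or $\omega$ beyond the $L^2$ level already controlled by the first bound. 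After Young's inequality the two terms leave, respectively, $C\norm{u}_{L^4}^2\norm{j}_{L^4}^2$ and $C\norm{\omega}_{L^2}^2\norm{b}_{L^\infty}^2$; using the first bound together with $\norm{j}_{L^4}^2\lesssim\norm{j}_{L^2}^{2-1/\beta}\norm{\Lambda^\beta j}_{L^2}^{1/\beta}$ and $\norm{b}_{L^\infty}\lesssim\norm{b}_{L^2}^{1-1/\beta}\norm{\Lambda^\beta b}_{L^2}^{1/\beta}$ (both valid because $\beta>1$) and the basic energy, both are $L^1$ in time. The remaining term $\int\Lambda^r T\,\Lambda^r j$ is handled by the Kato--Ponce fractional Leibniz rule and Gagliardo--Nirenberg, placing the $L^\infty$ norm on the magnetic factor $\nabla b$ (which enjoys smoothing) and $\norm{\nabla u}_{L^2}=\norm{\omega}_{L^2}$ on the velocity factor, absorbing the top-order magnetic contribution into $\norm{\Lambda^{\beta+r}j}_{L^2}^2$ and leaving an $L^1$-in-time coefficient times $(1+\norm{\Lambda^r j}_{L^2}^2)$. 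Gr\"onwall then yields the second estimate.

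The main obstacle throughout is the complete absence of dissipation for the velocity: every term carrying a velocity gradient must be controlled using only the $L^2$ and $H^1$ bounds from the energy and from the first estimate, by trading derivatives onto the magnetic field (whenever $2r+1=\beta+r$ lets the surplus derivatives land on $j$) and by using the time-integrated magnetic dissipation $\int_0^T\norm{\Lambda^r j}_{L^2}^2$ as a Gr\"onwall coefficient. The threshold $\beta>1$ is exactly what makes all these Gagliardo--Nirenberg and Young balances close, with the velocity norms appearing at most quadratically and the magnetic dissipation appearing at an absorbable power.
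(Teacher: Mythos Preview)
The paper does not prove this lemma: it simply cites the estimates from \cite{TYZ2013} (for the first inequality) and \cite{JZ2013} (for the second), so there is no proof here to compare against line by line. Your strategy---basic energy to get $\int_0^T\norm{\Lambda^r j}_{L^2}^2<\infty$, then a coupled $\omega$--$j$ $L^2$ estimate closed by Gr\"onwall with that quantity as coefficient, then a $\Lambda^r j$ estimate exploiting $2r+1=\beta+r$---is exactly the standard route taken in those references, and the key mechanism you isolate (routing the interpolation through $\Lambda^r j$ so that the Gr\"onwall coefficient is $L^1_t$ and $\norm{\omega}_{L^2}$ appears with exponent $<2$) is the correct one for getting a global rather than local bound when $\beta>1$.

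One point to tighten: in the second estimate, invoking Kato--Ponce on $\Lambda^r T(\nabla u,\nabla b)$ directly is dangerous, because the product rule inevitably throws $\Lambda^r$ onto $\nabla u$, and you have no control on $\Lambda^r\omega$ (only on $\omega\in L^\infty_t L^2$). The clean fix is to do for the tension term exactly what you already do for the transport and stretching terms: use self-adjointness to write $\int\Lambda^r T\,\Lambda^r j=\int T\,\Lambda^{2r}j$, then estimate
\[
\Bigl|\int T\,\Lambda^{2r}j\Bigr|\le C\norm{\omega}_{L^2}\norm{\nabla b}_{L^\infty}\norm{\Lambda^{2r}j}_{L^2},
\]
interpolate $\norm{\Lambda^{2r}j}_{L^2}\le\norm{\Lambda^r j}_{L^2}^{1-r/\beta}\norm{\Lambda^{\beta+r}j}_{L^2}^{r/\beta}$, and use $\norm{\nabla b}_{L^\infty}\in L^2_t$ (which follows from $\int_0^T\norm{\Lambda^\beta j}_{L^2}^2<\infty$ and $\beta>1$). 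With that adjustment your sketch goes through.
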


\section{The Proof of Theorem 1.1}\label{sec:the proof of theorem 1.1}
In this section, we will prove our main result Theorem 1.1. The
proof is divided into three steps.

Step 1: $\omega\in L^\infty(0,T;L^p(\mathbb R^2)), j\in L^p(0,T;\mathbb R^2)$ for any $2<p<\infty$.

First,  the second equation in (\ref{eq}) can be rewritten as
\begin{displaymath}
b_t+\Lambda^{2\beta}b=\sum_{i=1}^2\partial_i (  b_i u-u_i b)
\end{displaymath}
Due to (\ref{heq}), we have
\begin{equation}
b\left(x,t\right)=\int_{\mathbb{R}^2} t^{-\frac{1}{\beta}}h\left(\frac{x-y}{t^\frac{1}{2\beta}}\right)b_0(y) \mathrm dy+
\int_0^t \int_{\mathbb{R}^2} (t-s)^{-\frac{1}{\beta}}h\left(\frac{x-y}{(t-s)^\frac{1}{2\beta}}\right)\sum_{i=1}^2\partial_i (  b_i u-u_i b)\left(y,s\right) \mathrm dy\mathrm d s.
\end{equation}
It follows from Lemma 2.2 that $b\in L^\infty(0,T;L^\infty(\mathbb
R^2))$ and $u\in L^\infty(0,T;L^p(\mathbb R^2))$ for any $2\leqslant
p<\infty$. Thanks to Lemma 2.1, we can get
\begin{eqnarray}
\|\nabla b\|_{L^p(0,T;\mathbb R^2)}& \leqslant & C(T)\|\nabla b_0\|_{L^p(\mathbb R^2)}+ C\|bu\|_{L^p(0,T;\mathbb R^2)} \int_0^T \left\|t^{-\frac{2}{\beta}}(\nabla^2 h)\left(\frac{\cdot}{t^\frac{1}{2\beta}}\right)\right\|_{L^1(\mathbb R^2)}\mathrm dt
\nonumber
\\
&\leqslant& C(T)\nonumber\\
\|\nabla^2 b\|_{L^2(0,T;L^p(\mathbb R^2))}& \leqslant & C\|\nabla b_0\|_{L^p(\mathbb R^2)}\left(\int_0^T \left\|t^{-\frac{3}{2\beta}}(\nabla h)\left(\frac{\cdot}{t^\frac{1}{2\beta}}\right)\right\|_{L^1(\mathbb R^2)}^2\mathrm dt\right)^{\frac{1}{2}}
\nonumber\\
&+& C\|b\cdot\nabla u-u\cdot\nabla b\|_{L^2(0,T;L^p(\mathbb R^2))} \int_0^T \left\|t^{-\frac{2}{\beta}}(\nabla^2 h)\left(\frac{\cdot}{t^\frac{1}{2\beta}}\right)\right\|_{L^1(\mathbb R^2)}\mathrm dt
\nonumber\\
&\leqslant&C(T)\|\nabla b_0\|_{L^p(\mathbb R^2)} +C(T)\|b\cdot\nabla u-u\cdot\nabla b\|_{L^2(0,T;L^p(\mathbb R^2))}\label{in1}.
\end{eqnarray}
for any $2\le p<\infty$.

Multiplying (\ref{eq:omega-L2}) by $|\omega|^{p-2}\omega(p>2)$, and
integrating with respect to $x$, we get
\begin{eqnarray*}
  \frac{1}{p}\frac{\mathrm d}{\mathrm dt} \left\| \omega \right\|_{L^p}^p
  & \leqslant &  \int_{\mathbb{R}^2} \left| b \right|  \left| \nabla j \right|
  \left| \omega \right|^{p - 1} \mathrm dx,\\
  & \leqslant & \left\| b \right\|_{L^{\infty}}\left\| \nabla j \right\|_{L^p}\left\| \omega \right\|_{L^p}^{p-1}
\end{eqnarray*}
Thus, we have
\begin{eqnarray*}
  \frac{1}{2}\frac{\mathrm d}{\mathrm dt} \left\| \omega \right\|_{L^p}^2
  & \leqslant & \left\| b \right\|_{L^{\infty}}\left\| \nabla j \right\|_{L^p}\left\| \omega \right\|_{L^p}
\end{eqnarray*}and
\begin{eqnarray*}
\left\| \omega \right\|_{L^p}^2 &\leqslant& C\left\| \omega(x,0) \right\|_{L^p}^2+ C\int_0^t (\left\| \nabla j \right\|_{L^p}^2+\left\| \omega \right\|_{L^p}^2)\mathrm ds\\
& \leqslant & C + C\int_0^t (\left\| \nabla^2 b \right\|_{L^p}^2+\left\| \omega \right\|_{L^p}^2)\mathrm ds\\
& \stackrel{(\ref{in1})}{\leqslant} & C + C\int_0^t (\left\| b\cdot\nabla u-u\cdot\nabla b \right\|_{L^p}^2+\left\| \omega \right\|_{L^p}^2)\mathrm ds\\
& \leqslant & C + C\int_0^t (\left\| \nabla b \right\|_{L^p}^2 \left\| u \right\|_{L^\infty}^2+\left\| \omega \right\|_{L^p}^2)\mathrm ds\\
& \leqslant & C + C\int_0^t (1+ \left\| \nabla b \right\|_{L^p}^2 )\left\| \omega \right\|_{L^p}^2\mathrm ds.
\end{eqnarray*}
This, combining with the Gronwall's inequality, leads to $\omega\in
L^\infty(0,T;L^p(\mathbb R^2))$ for any $2<p<\infty$.

Step 2: $\nabla j\in L^2(0,T;L^\infty(\mathbb R^2))$.

Similar to  \cite{JZ2013}, we apply
$\Lambda^\delta(0<\delta<\min\{2\beta-2,\rho-2\})$ on both sides of
\eqref{eq:j-L2} to obtain
\begin{eqnarray}
    (\Lambda^\delta j)_t + \Lambda^{2 \beta}\Lambda^\delta j =-\Lambda^\delta(u \cdot \nabla j)  +\Lambda^\delta(b \cdot \nabla \omega) + \Lambda^\delta(T \left( \nabla u,\nabla b \right)).
\end{eqnarray}
Thanks to Lemma 2.2 and Step 1, we have that $uj, b\omega,
\mbox{and}\,\, T \left( \nabla u,\nabla b \right) \in
L^p(0,T;\mathbb R^2)$ for any $2<p<\infty$. In the same way as in
Step 1, we have
\begin{eqnarray*}
\Lambda^\delta j\left(x,t\right)&=&\int_{\mathbb{R}^2} t^{-\frac{1}{\beta}}h\left(\frac{x-y}{t^\frac{1}{2\beta}}\right)\Lambda^\delta j_0(y) \mathrm dy\\&+&
\int_0^t \int_{\mathbb{R}^2} (t-s)^{-\frac{1}{\beta}}h\left(\frac{x-y}{(t-s)^\frac{1}{2\beta}}\right)(-\Lambda^\delta(u \cdot \nabla j)  +\Lambda^\delta(b \cdot \nabla \omega) ) \left(y,s\right) \mathrm dy\mathrm d s\\
&+ &\int_0^t \int_{\mathbb{R}^2} (t-s)^{-\frac{1}{\beta}}h\left(\frac{x-y}{(t-s)^\frac{1}{2\beta}}\right)\Lambda^\delta(T \left( \nabla u,\nabla b \right))\left(y,s\right) \mathrm dy\mathrm d s
\end{eqnarray*}
and
\begin{eqnarray*}
\|\nabla \Lambda^\delta j\|_{L^2(0,T;L^p(\mathbb R^2))}& \leqslant & C\|\Lambda^\delta j_0\|_{L^p(\mathbb R^2)}
\left(\int_0^T \left\|t^{-\frac{3}{2\beta}}(\nabla h)\left(\frac{\cdot}{t^\frac{1}{2\beta}}\right)\right\|_{L^1(\mathbb R^2)}^2\mathrm dt\right)^{\frac{1}{2}}
\\
&+&
C\|uj\|_{L^2(0,T;L^p(\mathbb R^2))} \int_0^T \left\|t^{-\frac{4+\delta}{2\beta}}(\Lambda^{\delta}\nabla^2 h)\left(\frac{\cdot}{t^\frac{1}{2\beta}}\right)\right\|_{L^1(\mathbb R^2)}\mathrm dt
\nonumber
\\
&+&
C\|bw\|_{L^2(0,T;l^p(\mathbb R^2))} \int_0^T \left\|t^{-\frac{4+\delta}{2\beta}}(\Lambda^{\delta}\nabla^2 h)\left(\frac{\cdot}{t^\frac{1}{2\beta}}\right)\right\|_{L^1(\mathbb R^2)}\mathrm dt
\nonumber
\\
&+&
C\|T \left( \nabla u,\nabla b \right)\|_{L^2(0,T;L^p(\mathbb R^2))} \int_0^T \left\|t^{-\frac{3+\delta}{2\beta}}(\Lambda^{\delta}\nabla h)\left(\frac{\cdot}{t^\frac{1}{2\beta}}\right)\right\|_{L^1(\mathbb R^2)}\mathrm dt
\nonumber
\\
&\leqslant& C(T)
\end{eqnarray*}
for any $2<p<\infty$. So we can choose $\delta$  small and $p$ large
enough such that $\nabla j\in L^2(0,T;L^\infty(\mathbb R^2))$ and
$\|\Lambda^\delta j_0\|_{L^p}\leqslant C \| j_0\|_{H^\rho}$.

Step 3: $\omega \in L^{\infty} \left( 0, T ; L^{\infty} \right)$.

Because of the estimates of the step 2, and the following equation
\begin{equation*}
    \omega_t + u \cdot \nabla \omega = b \cdot \nabla j,
\end{equation*}
we can prove that $\omega \in L^{\infty} \left( 0, T ; L^{\infty}
\right)$ by using the particle trajectory method. By taking
advantage of the BKM type criterion for global regularity (see
\cite{CKS1997}), we finish the proof of Theorem 1.1.


\begin{thebibliography}{10}
   \bibitem{BCD2011}H.~Bahouri, J.Y.~Chemin, R.~Dachine, Fourier analysis and
   nonlinear partial differential equations, Springer, 2011

   \bibitem{CKS1997}R. E.~Caflisch, I.~Klapper, G.~Steele,{\newblock} Remarks
    on singularities, dimension and energy dissipation for ideal hydrodynamics
    and MHD,{\newblock} Comm. Math. Phys. 184 (1997) 443--455{\newblock}

    \bibitem{CWY-2013}C.~Cao, J.~Wu, B.~Yuan,{\newblock} The 2D incompressible
    magnetohydrodynamics equations with only magnteic diffusion,{\newblock}
    (2013)ArXiv: 1306.3629

    \bibitem{FNZ2013}J.~Fan, G.~Nakamura, Y.~Zhou,{\newblock} Global Cauchy problem of
     2D generalized MHD equations, preprint.

   \bibitem{JN2006}Q.~Jiu, D.~Niu, Mathematical results related to a two-dimensional
   magneto-hydrodynamic equations, {\newblock} Acta Math. Sci. Ser. B English. Ed. 26
   (2006)744-756{\newblock}

   \bibitem{JZ2013}Q.~Jiu, J.~Zhao,{\newblock} A remark on global regularity of
   2D generalized magnetohydrodynamic equations,{\newblock} (2013)ArXiv: 1306.2823

  \bibitem{TYZ2013}C.V.~Tran, X.~Yu, Z.~Zhai,{\newblock} On global
  regularity of 2D generalized magnetodydrodynamics equations,{\newblock}
  J. Differential. Equations. 254 (2013) 4194-4216 {\newblock}

  \bibitem{W2003}J.~Wu,{\newblock} Generalized MHD
  equations,{\newblock} J. Differential Equations. 195 (2) (2003) 284-312,
  2003.{\newblock}

  \bibitem{W2011}J.~Wu,{\newblock} Global regularity for a class of
  generalized magnetohydrodynamic equations,{\newblock} J. Math.
  Fluid Mech. 13 (2011) 295--305 {\newblock}

  \bibitem{Y2013-1}K.~Yamazaki,{\newblock} On the global regularity of two-dimension
  generalized magnetohydrodynamics system,{\newblock}(2013) ArXiv: 1306.2842

  \bibitem{Y2013-2}K.~Yamazaki,{\newblock} Remarks on the global regularity
  of two-dimensional magnetohydrodynamics system with zero dissipation,
  {\newblock} (2013) ArXiv: 1306.2763

  \bibitem{YB2013}B.~Yuan, L.~Bai,{\newblock} Remarks on global regularity of
  2D generalized MHD equations,{\newblock} (2013) ArXiv: 1306.2190

  \bibitem{ZF2011}Y.~Zhou, J.~Fan,{\newblock} A regularity criterion for the 2D MHD
  system with zero magnetic diffusivity, J. Math. Anal. Appl. 378 (2011) 169-172




\end{thebibliography}
\end{document}